\documentclass[10pt]{article}


\usepackage{amsfonts,a4wide,amsmath,amssymb,amsthm}
\usepackage[utf8]{inputenc}  
\usepackage[T1]{fontenc}
\usepackage[all]{xy}    
\usepackage{mathtools}  
\usepackage{multirow}    
\usepackage{csquotes}	
\usepackage{bm} 
\usepackage{tikz}   
\usepackage{float}
\usepackage{relsize}
\usepackage{hyperref}      
\hypersetup{
     colorlinks=true, 
     breaklinks=true, 
     urlcolor= blue,  
     linkcolor= black, 
     linktoc	=all,	
     citecolor=red,	
     filecolor=black,	
     bookmarksopen=true,            
     pdftitle={}, 
     pdfauthor={Samuel Le Fourn},     
}

\numberwithin{equation}{section}
\allowdisplaybreaks

\theoremstyle{plain}
\newtheorem{thm}{Theorem}[section]
\newtheorem{prop}[thm]{Proposition}
\newtheorem{cor}[thm]{Corollary}

\newtheorem{lem}[thm]{Lemma}

\theoremstyle{definition}
\newtheorem{defi}[thm]{Definition}

\theoremstyle{remark}
 \newtheorem{rem}[thm]{Remark}

\renewcommand{\a}{\alpha}
\renewcommand{\b}{\beta}

\renewcommand{\d}{\delta}

\newcommand{\z}{\zeta}


\newcommand{\gp}{{\mathfrak{p}}}
\newcommand{\gP}{{\mathfrak{P}}}


\newcommand{\Bcal}{{\mathcal B}}

\newcommand{\Ical}{{\mathcal I}}

\newcommand{\Ocal}{{\mathcal O}}

\newcommand{\Xcal}{{\mathcal X}}
\newcommand{\Ycal}{{\mathcal Y}}



\newcommand{\Z}{{\mathbb{Z}}}
\newcommand{\Q}{{\mathbb{Q}}}
\newcommand{\R}{{\mathbb{R}}}

\renewcommand{\P}{\mathbb{P}}


\newcommand{\ord}{\operatorname{ord}}

\newcommand{\Supp}{\operatorname{Supp}}



\newcommand{\quot}[2]                                    
{\raisebox{.6ex}{\newline$#1$}\!/\!\raisebox{-.6ex}{$#2$}}




\newcommand{\Kb}{\overline{K}}









\title{Tubular approaches to Baker's method for curves and varieties}
\author{Samuel Le Fourn\footnote{Supported  by the European Union’s Horizon 2020 research and programme under the Marie Sklodowska-Curie grant agreement No 793646, titled LowDegModCurve.} \\ University of Warwick \footnote{samuel.le-fourn@warwick.ac.uk}}
\date\today

\begin{document}
\maketitle

\begin{abstract}
	Baker's method, relying on estimates on linear forms in logarithms of algebraic numbers, allows one to prove in several situations the effective finiteness of integral points on varieties. In this article, we give a generalisation of results of Levin regarding Baker's method for varieties, and explain how, quite surprisingly, it mixes (under additional hypotheses) with Runge's method to improve some known estimates in the case of curves by bypassing (or more generally reducing) the need for linear forms in $p$-adic logarithms. 
\end{abstract}

\section{Introduction}

	One of the main concerns of number theory is solving polynomial equations in integers, which amounts to determining the integral points on the variety defined by those equations. For a smooth projective curve over a number field, Siegel's theorem says that there are generally only finitely many integral points on this curve, but this result is as of yet deeply ineffective in that it does not provide us with any way to actually determine this set of integral points.
	
	We focus here on Baker's method (and to a lesser extent Runge's method), which are both \textit{effective}:  when applicable, they give a bound on the height of the integral points considered.
	Our work is based on Bilu's conceptual approach \cite{Bilu95} for curves, and its generalisation to higher-dimensional varieties by \cite{Levin14}. It is also heavily inspired (sometimes implicitly) by a previous article \cite{LeFourn3}, dealing with Runge's method. Before stating the main results, let us give some notations. 
	
	$K$ is a fixed number field, $L$ is a finite extension of $K$ with set of places $M_L$ divided into its archimedean places $M_L^\infty$ and its finite places $M_L^f$, and $S$ a finite set of places of $L$  containing $M_L^{\infty}$ (the pair $(L,S)$ will be allowed to change). The set of $S$-integers of $L$ is denoted by $\Ocal_{L,S}$ and the regulator of $\Ocal_{L,S}^*$ by $R_S$. We also will denote by $P_S$ the largest norm of an ideal coming from a finite place of $S$ (equal to 1 if $S = M_L^\infty$).
	
	The notion of integral point on a variety will be precisely defined (model-theoretically) in paragraph \ref{subsecnotations}, but the result is compatible with all reasonable definitions, e.g. the one in (\cite{Vojtadiophapp}, section I.4). The set $(X \backslash Y)(\Ocal_{L,S})$ will thus be the set of $S$-integral points of the variety $X \backslash Y$, where $Y$ is a closed subset of $X$.
	
	\begin{thm}
	\label{thmmainBaker}
	
		Let $X$ be a smooth projective variety over $K$ and $D_1, \cdots, D_n$ be ample effective divisors on $X$, $D = \bigcup_{i=1}^n D_i$, and $h_D$ a choice of global logarithmic height relative to $D$.
		
		The number $m_\Bcal$ (assumed to exist) is the smallest integer such that for any set $I \subset \{1, \cdots,n\}$ with $|I| = m_\Bcal$, the intersection $T_I = \bigcap_{i \in I} \operatorname{Supp} (D_i) (\overline{K})$ is finite. For any point $P$ in the finite set 
		\[
		T := \bigcup_{|I|=m_\Bcal} T_I,
		\]
		assume 
		
		$(H)_P$ : \enquote{there exists $\phi_P \in K(X)$ whose support is included in $\Supp D \backslash P$ (and $\phi_P(P)=1$ for simplicity)}. Such a function will be fixed in the following.
		
		Let $Y$ be a closed subset of $X$. The number $m_Y$ (assumed to exist) is the smallest integer such that for any set $I \subset \{1, \cdots,n\}$ with $|I| > m_Y$,  $\bigcap_{i \in I} \operatorname{Supp} (D_i)  \subset Y$).
		
		Then, there exists an effectively computable constants $C>0$ and an explicit function $C_1(d,s)$ such that for any pair $(L,S)$ where $[L:\Q]=d$ and $s=|S|$ satisfying 
		\begin{equation}
		\label{eqtubBakerhyp}
		(m_\Bcal-1) |M_L^{\infty}| + m_Y |S \backslash M_L^{\infty}| < n,
		\end{equation}
		for every $Q \in (X \backslash D)(\Ocal_{L,S}) \cap (X \backslash Y) (\Ocal_L)$,
		\begin{equation}
		\label{eqboundBaker}
		h_D(Q) \leq C \cdot C_1(d,s)  h_L R_S \log^* (h_L R_S),
		\end{equation}
		where $\log^*(x) = \max(\log x,1)$, unless 
		\[
		Q \in Z:= \bigcup_{P \in T} Z_{\phi_P}, \quad Z_{\phi_P} := \overline{\{ Q \in (X \backslash \Supp \phi_P)(\overline{K}), \, \phi_P(Q) = 1 \} },
		\]
		this set being an effective strict closed subset of $X$ independent of $(L,S)$.
	\end{thm}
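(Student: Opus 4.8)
The strategy is the classical Bilu–Levin approach to Baker's method, adapted to the "tubular" setting of hypothesis \eqref{eqtubBakerhyp}. First I would reduce to a local analysis: for each integral point $Q \in (X\backslash D)(\Ocal_{L,S})$, and each place $v \in S$, measure how $v$-adically close $Q$ is to the divisor $D$. The key combinatorial input is the definition of $m_\Bcal$: at an archimedean place, $Q$ can be close to at most $m_\Bcal - 1$ of the divisors $D_i$ simultaneously (otherwise it would be close to a point of the finite set $T$, which is handled separately via the functions $\phi_P$), while at a finite place of $S$ it can be close to at most $m_Y$ of them without landing in $Y$ — and $Q$ is assumed to avoid $Y$. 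The inequality \eqref{eqtubBakerhyp} then forces, by a pigeonhole/summation argument over the product formula, that $Q$ is uniformly bounded away from $D$ at \emph{most} places, giving a lower bound on $\prod_v$ of the relevant distances that, combined with the product formula, yields a preliminary height bound — but only after the logarithmic forms estimates are fed in.

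The heart of the argument is the construction, near each point $P$ of the finite intersection set $T$, of a system of functions in $K(X)$ vanishing to high order along the components of $D$ through $P$, so that the local distance from $Q$ to $D$ can be expressed through $S$-units, and then applying Baker-type lower bounds for linear forms in logarithms (archimedean and, reduced here as much as possible, $p$-adic) to those $S$-units. This is where the dependence on $h_L$, $R_S$ and the factor $C_1(d,s)$ enters: the linear forms in logarithms estimate contributes the $R_S \log^*(R_S)$ shape, and the number of units involved (bounded in terms of $d$ and $s$) is absorbed into $C_1(d,s)$; the role of \eqref{eqtubBakerhyp} is precisely to guarantee that the number of "bad" places where one must invoke the $p$-adic estimates is small enough that the method closes, i.e. that the gain from the height inequality beats the loss from the logarithmic forms bound. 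Concretely, I would follow Levin's framework (\cite{Levin14}) for the higher-dimensional bookkeeping of the divisors $D_i$ and the sets $T_I$, and Bilu's / \cite{LeFourn3}'s framework for interfacing the local distances with $S$-unit equations.

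Finally, one must deal with the exceptional locus $Z$. The functions $\phi_P$ from $(H)_P$ have support inside $\Supp D \backslash P$, so $\phi_P(Q)$ is an $S$-unit whenever $Q$ is $S$-integral on $X \backslash D$; if $Q \notin Z_{\phi_P}$ then $\phi_P(Q) \neq 1$, and I would use this together with the height machinery to control the contribution near $P$ — the set $Z_{\phi_P}$ is exactly where this argument degenerates, and taking the (finite) union over $P \in T$ gives the closed subset $Z$, which is manifestly independent of $(L,S)$ and a strict closed subset since each $\phi_P$ is non-constant. The main obstacle I anticipate is bookkeeping: making the constants genuinely effective and checking that the combinatorial hypothesis \eqref{eqtubBakerhyp} really does force enough places to be "good" in the higher-dimensional setting, where the $T_I$ need not be disjoint and a point $Q$ may be simultaneously close to several divisors in ways that the one-dimensional argument does not see — this is where Levin's refinement of Bilu's method is essential and where the careful definition of $m_Y$ (controlling the finite-place contribution via the hypothesis $Q \in (X\backslash Y)(\Ocal_L)$) does the work.
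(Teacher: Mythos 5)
Your plan follows the same Bilu--Levin route as the paper (decompose each $h_{D_i}$ into local heights, pigeonhole over $S$, reduce to a point $P \in T$, turn $\phi_P(Q)$ into a unit close to $1$ and apply Baker, with $Z_{\phi_P}$ as the degenerate locus), and most of the architecture is right. But one step, as written, would fail to deliver the stated bound: you say the role of \eqref{eqtubBakerhyp} is to keep small ``the number of bad places where one must invoke the $p$-adic estimates''. In the actual argument no $p$-adic linear forms estimate is invoked at all. For each $i$ one picks $w \in S$ carrying at least a fraction $1/s$ of $h_{D_i}(Q)$; if some \emph{finite} $w \in S$ receives more than $m_Y$ indices, the corresponding intersection of supports lies in $Y$, and the hypothesis $Q \in (X \backslash Y)(\Ocal_L)$ (integrality with respect to $Y$ at \emph{all} finite places) makes $h_{Y,w}(Q)$, hence $\min_j h_{D_j,w}(Q)$, hence $h_D(Q)$, bounded outright --- a pure Runge-type conclusion with no transcendence input. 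Otherwise, counting via \eqref{eqtubBakerhyp} forces some \emph{archimedean} place to receive at least $m_\Bcal$ indices, so $Q$ is $w$-adically close to some $P \in T$ for $w$ archimedean, and the only Baker input is the archimedean lower bound for $|\phi_P(Q)-1|_w$. Invoking $p$-adic lower bounds at residual finite places, as your plan suggests, would reintroduce a factor $P_S$ and not prove \eqref{eqboundBaker}; eliminating that factor is the point of the theorem.

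Two smaller corrections. No ``system of functions vanishing to high order along the components of $D$ through $P$'' is constructed or needed: hypothesis $(H)_P$ hands you the single function $\phi_P$, and since its support lies in $\Supp D$ one only gets $m\phi_P(Q),\, m\phi_P(Q)^{-1} \in \Ocal_{L,S}$ for some fixed integer $m$, i.e. $\phi_P(Q) \in \Ocal_{L,S\cup S_m}^*$ with $S_m$ the places dividing $m$; this forced enlargement of $S$ is exactly where the factor $h_L$ in \eqref{eqboundBaker} comes from, via $R_{S\cup S_m} \leq h_L R_S m^{d/e}$ --- a point your plan does not account for. Also, the passage from ``$Q$ close to at least $m_\Bcal$ divisors at $w$'' to ``$Q$ close to a point of $T$ at $w$'' uses that $h_{T_I,w}$ is $M_K$-proportional to $\min_{i\in I} h_{D_i,w}$ (functoriality of local heights under intersection, with effective constants from a Nullstellensatz), not a product-formula argument.
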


	In the case of curves, one obtains the following corollary.
	
	\begin{cor}
	\label{corBaker}
	
		Let $C$ be a smooth projective curve over $K$ and $\phi \in K(C)$ nonconstant.
		
		Assume that every pole $P$ of $\phi$ satisfies $(H)_P$ (and to simplify, is defined over $K$). Then, there are a constant $C>0$ and an explicit function $C_1(d,s)$ such that for any pair $(L,S)$ satisfying 
		\begin{equation}
		\label{eqalmostunifconditioncurves}
		|S \backslash M_L^{\infty}| <n,
		\end{equation}
		any point $Q \in C(L)$ such that $\phi(Q) \in \Ocal_{L,S}$ satisfies 
		\[
		h(\phi(Q)) \leq C \cdot C_1(d,s)  h_L R_S\log^* (h_L R_S).
		\]
	\end{cor}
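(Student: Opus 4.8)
The plan is to derive Corollary \ref{corBaker} from Theorem \ref{thmmainBaker} by the standard trick of embedding the curve into a projective space via the rational function $\phi$ and pulling back hyperplane divisors. Concretely, let $C$ be the given smooth projective curve and $\phi \in K(C)$ nonconstant, with poles $P_1,\dots,P_k$ (assumed defined over $K$), each satisfying $(H)_{P_j}$. The key observation is that for a curve, ``finitely many points'' is automatic: any proper closed subset of $C$ is finite. So I would take $X = C$ and choose the divisors $D_1,\dots,D_n$ all equal (or linearly equivalent) to a suitable positive multiple of the polar divisor of $\phi$ — or more carefully, pick $n$ effective ample divisors whose supports are all contained in $\{P_1,\dots,P_k\} = \Supp(\phi)_\infty$, so that $D = \bigcup D_i$ has support exactly the pole set of $\phi$. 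Since $C$ is a curve, the intersection of any two distinct points is empty (finite), so $m_\Bcal = 1$; hence the hypothesis $(H)_P$ in Theorem \ref{thmmainBaker} is required only at the points of $T = \bigcup_{|I|=1} T_I = \Supp D$, i.e. exactly at the poles of $\phi$, which is precisely what the corollary assumes via $(H)_{P_j}$.

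Next I would address the $Y$ and $m_Y$ bookkeeping. Take $Y = \emptyset$; then the condition that $\bigcap_{i\in I}\Supp(D_i) \subset Y$ for $|I| > m_Y$ forces us to take $m_Y = 1$ as well (once $|I|\ge 2$ the intersection is empty $\subset Y$, but for $|I|=1$ it is a nonempty finite set not contained in $\emptyset$), and the side condition $Q \in (X\setminus Y)(\Ocal_L) = C(\Ocal_L) = C(L)$ (as $C$ is projective) is vacuous. With $m_\Bcal = 1$ and $m_Y = 1$, the inequality \eqref{eqtubBakerhyp} becomes
\[
0 \cdot |M_L^\infty| + 1 \cdot |S\setminus M_L^\infty| < n,
\]
which is exactly the hypothesis \eqref{eqalmostunifconditioncurves} of the corollary. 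So for every pair $(L,S)$ satisfying \eqref{eqalmostunifconditioncurves}, Theorem \ref{thmmainBaker} applies and gives $h_D(Q) \le C\, C_1(d,s)\, h_L R_S \log^*(h_L R_S)$ for every $Q \in (C\setminus D)(\Ocal_{L,S})$, outside the exceptional set $Z$.

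It then remains to (i) match $h_D$ with $h(\phi(Q))$ and (ii) dispose of the exceptional set $Z$. For (i): since $\Supp D$ is the pole set of $\phi$, the function $\phi$ defines a morphism $C \to \P^1$ with $\phi^*(\infty) \le \deg(\phi)\cdot$(some multiple related to the $D_i$), so by functoriality of heights, $h(\phi(Q)) = h_{\phi^*(\infty)}(Q) + O(1)$, which is comparable (up to an effective multiplicative constant absorbed into $C$, and using that all $D_i$ are ample so their heights are mutually comparable up to $O(1)$) to $h_D(Q)$; likewise $Q \in (C\setminus D)(\Ocal_{L,S})$ is equivalent to $\phi(Q) \in \Ocal_{L,S}$ up to the usual comparison of $S$-integrality notions (paragraph \ref{subsecnotations}). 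For (ii): the exceptional set $Z = \bigcup_{P\in T} Z_{\phi_P}$ is, by the theorem, an effective \emph{strict} closed subset of the curve $C$, hence a finite set of points, all defined over $\Qb$ and independent of $(L,S)$; for each such point $Q_0$, the value $h(\phi(Q_0))$ is a fixed constant, so enlarging $C$ (the constant in the bound) to also dominate $\max_{Q_0 \in Z} h(\phi(Q_0))$ makes the stated inequality hold for all $Q \in C(L)$ with $\phi(Q)\in\Ocal_{L,S}$ without exception. The main obstacle — really the only non-formal point — is step (i): one must check that the logarithmic height $h_D$ furnished by the theorem (relative to the specific ample divisors $D_i$ built from the poles of $\phi$) differs from $h(\phi(Q))$ only by an effectively bounded additive-and-multiplicative amount, uniformly in $(L,S)$; this follows from standard functoriality and positivity properties of Weil heights together with the fact that the $D_i$ are chosen ample with support in the polar locus of $\phi$, so I would spell out that comparison carefully and absorb the resulting constants into $C$.
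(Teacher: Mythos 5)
Your derivation is essentially the intended one: the paper gives no separate proof of Corollary \ref{corBaker}, which is meant to follow from Theorem \ref{thmmainBaker} exactly by the specialization you describe ($X=C$, one divisor per pole of $\phi$, $Y=\emptyset$, $m_\Bcal=m_Y=1$, condition \eqref{eqtubBakerhyp} collapsing to \eqref{eqalmostunifconditioncurves}, and the exceptional set $Z$ being a finite set of points on a curve, hence absorbable into the constant).

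Two points need correction or more care. First, your opening option of taking all the $D_i$ equal (or linearly equivalent) to a multiple of the polar divisor must be discarded: if every $\Supp D_i$ is the whole pole set, then $\bigcap_{i\in I}\Supp D_i$ is that same nonempty set for every $I$, so with $Y=\emptyset$ one is forced to $m_Y=n$ and the condition degenerates to $S=M_L^\infty$. The computation $m_\Bcal=m_Y=1$ that you carry out afterwards requires the supports to be pairwise disjoint, i.e. $D_i=m_iP_i$ with $P_1,\dots,P_n$ the distinct poles (so that the $n$ of \eqref{eqalmostunifconditioncurves} is the number of distinct poles); you should commit to that choice. Second, the passage from \enquote{$\phi(Q)\in\Ocal_{L,S}$} to the model-theoretic hypothesis \enquote{$Q\in(C\setminus D)(\Ocal_{L,S})$} is not innocuous here: the two notions agree only after enlarging $S$ by a fixed finite set $S_0$ of places, and replacing $S$ by $S\cup S_0$ could destroy hypothesis \eqref{eqalmostunifconditioncurves}. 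The repair is to observe that the proof of the theorem uses integrality only through Proposition \ref{proplocalheights}: for any finite $w\notin S$ one has $h_{P_i,w}(Q)\leq k\, h_{\infty,w}(\phi(Q))+O_{M_K}(1)$ by parts $(c)$ and $(d)$, and $h_{\infty,w}(\phi(Q))$ is $M_K$-bounded precisely because $|\phi(Q)|_w\leq 1$; hence the pigeonhole step goes through with the original $S$, and the enlargement by $S_0$ (like that by $S_m$) enters only through the regulator bound \eqref{eqboundsregulator}, where it is harmless. With these two adjustments your argument is complete.
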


\begin{rem}
	Let us make some comments about those results.
	\begin{itemize}
		\item Under the assumption \eqref{eqtubBakerhyp} that $S$ does not have too many finite places (exactly translated by \eqref{eqalmostunifconditioncurves} for curves), one obtains a bound on the height which only grows in $R_S^{1+\varepsilon}$. In particular, there is no linear dependence on $P_S$ (which would come from estimates of linear forms in $p$-adic logarithms in a straightforward application of Baker's method), but rather in $\log P_S$, implicit in $R_S$, which might prove useful for some applications. 
		
		\item The set $Z$ can actually be made smaller: as done in \cite{Levin14}, we can replace each $Z_{\phi_P}$ by the intersection of \emph{all} $Z_{\phi_P}$ where $\phi_P$ runs through all functions satisfying the hypotheses of $(H)_P$. 
		
		\item The choice of subset $M_L^\infty$ of $S$ is quite arbitrary: replacing $M_L^\infty$ by a fixed $S' \supset M_L^{\infty}$ in each of its occurrences in Theorem \ref{thmmainBaker} and Corollary \ref{corBaker} (in particular the point $Q$ is now in $(X \backslash D)(\Ocal_{L,S}) \cap (X \backslash Y) (\Ocal_{L,S'})$) and considering sets of places $S \supset S'$, we obtain estimates of the shape \eqref{eqboundBaker} with an additional factor $P_{S'} \log^* P_{S'}$. With this consideration in mind, Theorem \ref{thmmainBaker} applied for $Y=D$ and $S=S'$ retrieves Levin's result (\cite{Levin14}, Theorem 1), and for smaller $S'$ (hence more hypotheses) improves the quantitative estimates it implicitly gave. 
		
		\item For general $Y$, Theorem \ref{thmmainBaker} improves qualitatively (when the hypotheses $(H)_P$ hold) a previous result based on Runge's method (\cite{LeFourn3}, Theorem 5.1 and Remark 5.2$(b)$), as condition \eqref{eqtubBakerhyp} is generally weaker than the tubular Runge condition defined there (the choice of set $(X \backslash Y) (\Ocal_{L,S'})$ is inspired by the notion of tubular neighborhood defined in that article, see section 3 there). Indeed, the $m=m_\emptyset$ in the statement of (\cite{LeFourn3}, Remark 5.2) is not $m_\Bcal$, and in general one only knows that $m_\Bcal \leq m+1$. If $m_\Bcal=m+1$, the original Runge's method can be applied as in \cite{Levin08} and gives better (and uniform) estimates than \cite{Levin14} (and the same holds for the tubular variants we propose), but if $m_\Bcal \leq m$ (the most likely situation), the condition \eqref{eqtubBakerhyp} is indeed more easily satisfied than for Runge's method.
		
	\item The words \enquote{effectively computable}  for $C$ deserve to be made more precise. One requires to know an embedding of $X$ in a projective space $\P^N_K$, explicit equations and formulas for $X$, the $D_i$, $D$, $h_D$, the points of $T$ and expressions of $\phi_P$ relative to this embedding. With this data, the effectivity boils down to an effective Nullstellensatz such as e.g. (\cite{MasserWustholz83}, Theorem IV). Now, the functions $C_1(d,s)$ (as well as $R_S \log^*(R_S)$) are coming from the theory of linear forms in logarithms, and as such completely explicit. The addition of $h_L$ is a technicality due to the necessity of slightly increasing the ring of units $\Ocal_{L,S}^*$ upon which to apply Baker's estimates, and can probably often be removed in special cases. 
	
	\item Unless we are in the case of curves, $m_\Bcal>1$ and then \eqref{eqtubBakerhyp} bounds $d$ and $s$ in terms of $n$, which allows us to replace $C_1(d,s)$ by an explicit functions $C_1(n)$.
	
	\end{itemize}
\end{rem}

	As an illustration of the effectivity of the method, we prove the following result on the $S$-unit equation: fix $L$ be a number field of degree $d$, $S \subset M_L^\infty$ a set of places of $L$ of cardinality $s$ and $\a,\b \in L^*$. We consider the $S$-unit equation
	\begin{equation}
\label{eqSunit}
\alpha x + \beta y = 1, \quad x,y \in \Ocal_{L,S}^*.
\end{equation}
	
	\begin{thm}
		\label{thmSunits}
		
		Let $L,S,\a,\b$ as above.
	
		\begin{itemize}
			\item If $S$ contains at most two finite places, all solutions of \eqref{eqSunit} satisfy
			\[
			\max(h(x),h(y)) \leq c(d,s) R_S \log^*(R_S) H,
			\] 
			where $H = \max(h(\a),h(\beta),1,\pi/d)$ and $c(d,s)$ is the constant defined as $c_{26}(s,d)$ in formula (30) of \cite{GyoryYu06}.
			
			\item For any set of places $S$, all solutions of \eqref{eqSunit} satisfy
			\[
			\max(h(x),h(y)) \leq c'(d,s) P'_S R_S (1+\log^*(R_S)/\log^* P'_S) H,
			\] 
			where $c'(d,s)=c_1(s,d)$ from Theorem 1 of \cite{GyoryYu06}, and $P'_S$ the third largest value of the norms of ideals coming from finite places of $S$. 
		\end{itemize} 
	\end{thm}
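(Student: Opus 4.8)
\textbf{Proof plan for Theorem \ref{thmSunits}.}

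The plan is to deduce this statement from Corollary \ref{corBaker} by choosing the curve and the function appropriately. Take $C = \P^1_K$ with $K = \Q$ (or any convenient base field over which $\a,\b$ make sense — in fact one works over $L$ directly), and consider the rational function $\phi$ whose relevant level sets cut out the $S$-unit equation. Concretely, on $\P^1$ with coordinate $t$, set $x = t$ and observe that a solution of \eqref{eqSunit} gives a point $t = x \in \Ocal_{L,S}^*$ with $(1 - \a t)/\b = y \in \Ocal_{L,S}^*$, i.e. $t$ avoids the three points $0, \infty, 1/\a$ (the zero of $1-\a t$) and moreover $t$ and $(1-\a t)/\b$ are both $S$-units. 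The three \enquote{forbidden} points $0,\infty,1/\a$ play the role of the support of $D$; since $\P^1$ minus three points has $n = 3$ and the condition \eqref{eqalmostunifconditioncurves} reads $|S\backslash M_L^\infty| < 3$, i.e. at most two finite places, we land exactly in the first bullet. For the second bullet, one instead keeps track of the dependence on $P_S$ coming from the $p$-adic part of Baker's estimates, which is precisely the content of the third remark after Corollary \ref{corBaker}: allowing more finite places reintroduces a factor linear in the relevant norm.

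First I would verify hypothesis $(H)_P$ for each of the three points $P \in \{0,\infty,1/\a\}$ of $C = \P^1$: one needs $\phi_P \in L(\P^1)$ with support inside the other two points and $\phi_P(P) = 1$. This is immediate on $\P^1$ — for instance for $P = 0$ take $\phi_0(t) = (1 - \a t)/(\b \cdot \text{(normalisation)})$ rescaled so $\phi_0(0)=1$, and symmetrically for the others — so the hypotheses of Corollary \ref{corBaker} are met with $n = 3$. Then for a solution $(x,y)$ of \eqref{eqSunit}, applying the corollary to the point $Q = x \in \P^1(L)$ (with $\phi$ chosen so that $\phi(Q) \in \Ocal_{L,S}$ encodes that $x$ is an $S$-unit) gives $h(x) \le C\, C_1(d,s)\, h_L R_S \log^*(h_L R_S)$, and the symmetry $x \leftrightarrow y$ (swap the roles of $\a,\b$) gives the same for $h(y)$. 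The remaining work is bookkeeping: matching the shape of the bound in \eqref{eqboundBaker} to the explicit constants $c(d,s) = c_{26}(s,d)$ and $c'(d,s) = c_1(s,d)$ of \cite{GyoryYu06}, absorbing $h_L$ and the dependence on $H = \max(h(\a),h(\b),1,\pi/d)$ (which enters because the \enquote{divisor} $1/\a$ and the rescaling of $\phi_P$ have heights controlled by $H$), and checking that the exceptional set $Z$ from Theorem \ref{thmmainBaker} is empty here — on $\P^1$ with $\dim X = 1$, each $Z_{\phi_P}$ is a finite (hence, after the effective analysis, explicitly excludable or empty) set of points, and one checks the solutions of the $S$-unit equation are not among them, or handles those finitely many values separately.

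The main obstacle I anticipate is \emph{not} the geometric set-up, which is essentially trivial on $\P^1$, but the quantitative calibration: Corollary \ref{corBaker} as stated produces \emph{some} explicit $C_1(d,s)$ coming from the linear-forms-in-logarithms input used in its proof, whereas Theorem \ref{thmSunits} asserts the bound with the \emph{specific} constants of Győry–Yu. To get exactly those one must either (i) re-run the proof of Corollary \ref{corBaker} tracking constants and verify it reproduces the Győry–Yu estimates — plausible since the $S$-unit equation is the prototypical case and Baker's method there \emph{is} essentially the Győry–Yu argument — or (ii) observe that for $\P^1$ minus three points the tubular machinery collapses and the corollary's proof \emph{is} literally an application of (\cite{GyoryYu06}, Theorem 1) and its refinement (formula (30)) for few finite places. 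The honest route is (ii): I would state that in this degenerate case the reduction in the proof of Theorem \ref{thmmainBaker} produces an $S$-unit equation over a controlled extension $(L', S')$ with $[L':\Q]$ and $|S'|$ bounded in terms of $d,s$, apply Győry–Yu's bounds there, and descend. The one genuinely delicate point is then controlling the field extension $L'$ and set $S'$ that the tubular/Baker reduction introduces (the \enquote{slight increase of the ring of units} mentioned in the last remark), and checking that $R_{S'}$, $P'_{S'}$ and the degree stay within the claimed functions of $d,s$; this is where the factor $h_L$ versus its absence, and the split into the two bullets by number of finite places, must be reconciled with the cited formulas.
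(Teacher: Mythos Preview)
Your geometric setup is correct: the $S$-unit equation is $\P^1$ minus three points, $n=3$, and condition \eqref{eqalmostunifconditioncurves} reads exactly \enquote{at most two finite places in $S$}. But applying Corollary \ref{corBaker} as a black box cannot yield the statement as written. The corollary's bound carries a factor $h_L$ (from the enlargement $S \to S \cup S_m$ in the proof of Theorem \ref{thmmainBaker}, needed because the values $\phi_P(Q)$ are only $S$-units up to a fixed denominator $m$), and its constant is a generic $C_1(d,s)$ rather than the specific $c_{26}(s,d)$ of \cite{GyoryYu06}. Theorem \ref{thmSunits} has neither. So your option (ii) is not a fallback but the only viable route, and it is exactly what the paper does: it reruns the pigeonhole step of Theorem \ref{thmmainBaker} \emph{by hand} on the three concrete quantities $E=\{\alpha x,\ \beta y,\ \beta y/(\alpha x)\}$, via an elementary lemma showing that at any single place $w$ at most one element of $E$ can have $h_w$ exceeding $\delta_w\log 2$. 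Since $x,y$ are genuine $S$-units from the outset, no enlargement of $S$ occurs and $h_L$ never enters. Once an \emph{archimedean} $w$ and some $P\in E$ with $\frac{n_w}{d} h_w(P)\geq (h-3H)/s$ are secured, the remainder is literally the computation on pp.~24--25 of \cite{GyoryYu06} (writing $y$ or $1/x$ in a fundamental system of $S$-units and invoking their Proposition 4 and Lemma 5), which is why their exact constants survive.

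For the second bullet your description is too vague to be a proof. The mechanism is not simply \enquote{allow the $p$-adic factor back in}: one sets $S'$ to be $S$ with its two prime ideals of largest norm removed, reruns the identical three-way pigeonhole to force the place $w$ into $S'$, and then the $p$-adic Baker estimate at $w$ contributes $N(w)\leq P'_S$ rather than $P_S$. That is precisely the source of the improvement over Theorem~1 of \cite{GyoryYu06}, and it does not fall out of the general remark after Corollary \ref{corBaker} without redoing the argument explicitly.
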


	This result provides an improvement on known bounds for solutions of the $S$-unit equations. More precisely, its dependence on $P'_S$ (instead of $P_S$ the largest norm of ideal coming from a place of $S$) becomes particularly interesting when there are at most two places of $S$ of large relative norm, and by construction it improves Theorem 1 of \cite{GyoryYu06}. One can also remark that such an estimate is likely to be close to optimality in terms of dependence on the primes in $S$, as replacing $R_S \log^*(R_S)$ by $o(R_S)$ in the first bound would imply that there are only finitely many Mersenne primes.
	
	On another hand, Theorem 4.1.7 of \cite{EvertseGyory15}, based on slightly different Baker-type estimates, has a better dependence on $s$ and $d$. It is possible to combine the strategy of proof of the latter theorem with our own to obtain an improvement of both results, essentially replacing again $P_S$ by $P'_S$. This is achieved in a recent preprint of Györy  \cite{Gyory19}.
	
	After proving Theorem \ref{thmmainBaker} and  Corollary \ref{corBaker} in section 3 (section 2 gathering the necessary reminders and tools for the proof), we prove Theorem \ref{thmSunits} in section 4. This application is heavily based on computations undertaken in \cite{GyoryYu06}, hence we have chosen to refer to it whenever possible, and focus on pointing out where the improvements come from our approach. 

\section*{Acknowledgements}

	I wish to thank K\'{a}lm\'{a}n Györy for his insightful comments on this paper and his remarks regarding the existing results on $S$-unit equations and their applications.

	\section{Reminders on Baker's theory and local heights}
	
		For any place $w$ of $L$, the norm $|\cdot|_w$ associated to $w$ is normalised to extend the norm on $\Q$ defined by $v_0$ below $w$, where $|\cdot|_\infty$ is the usual norm on $\Q$ and for every prime $p$ and nonzero fraction $a/b$,
		\[
		\left| \frac{a}{b} \right|_p = p^{\ord_p b - \ord_p a}.
		\]
		We also define $n_w = [L_w : \Q_{v_0}]$ the local degree of $L$ at $w$.
		
		In all discussions below, $X$ is a fixed projective smooth algebraic variety over the number field $K$ and closed subset of $X$ will mean a closed algebraic $K$-subvariety of $X$.
		
		Regarding the integrality, we choose a model-theoretic definition as follows. Assume $\Xcal$ is a proper model of $X$ over $\Ocal_K$, fixed until the end of this article. For every closed subset $Y$ of $X$, denote by $\Ycal$ the Zariski closure of $Y$ in $\Xcal$. The set of integral points $(X \backslash Y) (\Ocal_{L,S})$ will then implicitly denote the set of points $P \in X(L)$ whose reduction in $\Xcal_v(\kappa(w))$ for a place $w$ of $M_L \backslash S$ above $v \in M_K$ (well-defined by the valuative criterion of properness) never belongs to $\Ycal$.
	
		\subsection{\texorpdfstring{$M_K$-constants and $M_K$-bounded functions}{MK-constants and bounded functions}}
		\label{subsecnotations}
		
		The arguments below will be much simpler to present with the formalism of $M_K$-constants and $M_K$-functions briefly recalled here. 
		
		\begin{defi}
				\hspace*{\fill}
				
				\begin{enumerate}
			
				\item[$\bullet$] An \textit{$M_K$-constant} is a family $(c_v)_{v \in M_K}$ of nonnegative real numbers, all but finitely many of them being zero. 
				
				\item[$\bullet$] An \textit{$M_K$-function} $f$ (on $X$) is a function defined on a subset $E$ of $X(\Kb) \times M_{\Kb}$ with real values (typically, a local height function as below). Equivalently, it is defined as a function on a subset of $\bigsqcup_{K \subset L \subset \overline{K}} X(L) \times M_L$, consistently in the sense that if $f$ is defined at $(P,w)$ with $P \in X(L)$ and $w \in M_{L}$, then it is defined at $(P,w')$ with $w'|w \in M_{L'}$ for any extension $L'$ of $L$, and $f(P,w) = f(P,w')$.

				
				\item[$\bullet$] An $M_K$-function $f : E \rightarrow \R$ is \textit{$M_K$-bounded} if there exists an $M_K$-constant $(c_v)_{v \in M_K}$ for which for all $(P,w) \in E$, 
				\[
				|f(P,w)| \leq c_v \quad (w|v).
				\]
				The notation $O_{M_K}(1)$ will be used for an $M_K$-bounded function depending on the context (in particular, its domain $E$ will often be implicit but obvious).

				\item[$\bullet$]	Two $M_K$-functions $f,g : E \rightarrow \R$ are \textit{$M_K$-proportional} when there is an absolute constant $C>0$ and a $M_K$-constant $(c_v)_{v \in M_K}$ for which for all $(P,w) \in E$, 
				\[
				\frac{1}{C} |f(P,w)| -c_v \leq |g(P,w)| \leq C |f(P,w)| + c_v \quad (w|v).
				\]

				\item[$\bullet$] Two functions $f,g$ defined on an open subset $O$ of $X(\Kb)$ (typically, global height functions) are \textit{proportional} if there are absolute constants $C_1,C_2>0$ such that for every $P \in O$ : 
				\[
				\frac{1}{C_1} f(P) - C_2 \leq g(P) \leq C_1 f(P) + C_2.
				\]
%
			\end{enumerate}
			\end{defi}
		
		\subsection{Local heights associated to closed subsets}
		
		We will now define explicitly local height functions relative to closed subsets of a projective variety $X$.
		
		\begin{enumerate}

				\item[$\bullet$] For any point $P \in \P^N(L)$, one denotes by $x_P=(x_{P,0}, \cdots, x_{P,n}) \in L^{n+1}$ a choice of coordinates representing $P$ and $\|x_P\|_w  = \max_i |x_{P,i}|_w$.
				
				\item[$\bullet$] For a polynomial $g \in L[X_0, \cdots, X_N]$ and $w \in M_L$, the norm $\|g\|_w$ is the maximum norm of its coefficients for $| \cdot|_w$.
				
				\item[$\bullet$] Given a closed subset $Y$ of $\P^N_K$ and homogeneous polynomials $g_1, \cdots, g_m \in K[X_0, \cdots, X_N]$ generating the ideal of definition of $Y$, for any $w \in M_L$ and any $P \in (\P^N \backslash Y)(L)$, one defines explicitly a choice of local height of $P$ at $Y$ for $w$ by 
	
	\begin{equation}
	\label{eqdeflocheight}
	h_{Y,w} (P) := - \min_i \log \frac{|g_i(x_P)|_w}{\|g_i\|_w \| x_P\|_w^{\deg g_i}},
	\end{equation}
	and the global height by
	\[
	h_{Y}(P):= \frac{1}{[L:\P]} \sum_{w \in M_L} n_w \cdot h_{Y,w}(P).
	\]
	With this normalisation, for any $w \in M_L^f$ and $P \in (\P^N \backslash Y)(L)$, $h_{Y,w} (P) \geq 0$ and it is positive if and only if $P$ reduces in $Y$ modulo $w$.
\end{enumerate}
	
	Let us now sum up the main properties of those functions that we will need.
	
	\begin{prop}[Local heights]
		\hspace*{\fill}
		
		\label{proplocalheights}
		
		Let $X$ be a smooth projective variety over $K$, with an implicit embedding in a $\P^n_K$ and fixed choices of local heights as in \eqref{eqdeflocheight} for all closed subsets considered below.
		
		$(a)$ For any closed subsets $Y,Y'$ of $X$ the functions $h_{Y \cap Y',w}$ and $\min(h_{Y,w},h_{Y,w'})$ are $M_K$-proportional on $(X \backslash (Y \cup Y'))(\Kb) \times M_{\Kb}$.
		
		$(b)$ For a disjoint union $Y \sqcup Y'$ of closed subsets of $X$, one has 
		\[
		h_{Y,w}(P) + h_{Y',w}(P) = h_{Y \sqcup Y',w}(P) + O_{M_K}(1)
		\]
	on $(X \backslash (Y \cup Y'))(\Kb) \times M_{\Kb}$.
		
		$(c)$ For $Y \subset Y'$ closed subsets, one has 
		\[
		h_{Y,w}(P) \leq h_{Y',w}(P) + O_{M_K}(1)
		\]
	on $(X \backslash Y') (\Kb) \times M_{\Kb}$.
		
		$(d)$ If $\phi : X' \rightarrow X$ is a morphism of projective varieties, the functions $(P,w) \mapsto h_{Y,w}(\phi(P))$ (resp. $h_{\phi^{-1}(Y),w}(P)$) are $M_K$-proportional on $(X' \backslash \phi^{-1}((Y))(\Kb) \times M_{\Kb}$.
		
		$(e)$ For any closed subset $Y$ of $X$, the function $(P,w) \mapsto h_{Y,w}(P)$ is $M_K$-bounded on the set of pairs  satisfying $P \in (X \backslash Y) (\Ocal_{L,w})$ (independently of the number field $L$).
		
		$(f)$ For any effective divisor $D$ on $X$ and any function $\phi \in K(X)$ with support of poles included in $\Supp D$, the function $(P,w) \mapsto |\phi(P)|_w$ is $M_K$-bounded on the set of pairs $(P,w) \in X(L) \times M_L^f$ satisfying $P \in (X \backslash D)(\Ocal_{L,w})$ (independently of the number field $L$).

		$(g)$ If $D$ and $D'$ are two ample divisors on $X$, for any two choices of global heights $h_D$ and $h_{D'}$, they are proportional on $(X \backslash \Supp (D \cup D')) (\Kb)$.
				
		Furthermore, all the implied $M_K$-constants and constants are effective .
	\end{prop}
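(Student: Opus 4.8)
\medskip

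\noindent\textbf{Overall approach.} All seven assertions are standard properties of Weil local heights, so the plan is to derive each of them from a single elementary inequality together with one effective form of the Nullstellensatz, carefully tracking the $M_K$-constants throughout. The recurring estimate is that for a homogeneous $g\in L[X_0,\dots,X_N]$ of degree $e$ and any $x\in L^{N+1}$,
\[
|g(x)|_w \le \delta_w(g)\,\|g\|_w\,\|x\|_w^{\,e},\qquad \delta_w(g)=1 \text{ for } w\nmid\infty ,
\]
with $\delta_w(g)$ an explicit combinatorial constant (a count of monomials) at archimedean $w$. Before anything else I would carry out the reductions that cost only $M_K$-constants: pass to the radical of each defining ideal, clear denominators so that the chosen generators $g_i$ have $\ok$-coefficients, and represent each $P$ by $w$-primitive $w$-integral homogeneous coordinates $x_P$, so that $\|x_P\|_w=1$ for all but finitely many $w$. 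The one genuinely structural input is an effective Nullstellensatz, e.g.\ \cite{MasserWustholz83}: two homogeneous ideals with the same zero locus in $\P^N$ each contain an effectively bounded power of the other, so two generating families of ``the same'' closed subset produce local heights that agree up to an effective multiplicative factor and an $M_K$-constant. This is exactly what makes $(a)$, $(c)$, $(d)$, $(e)$, $(f)$ independent of the auxiliary choices.

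\medskip

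\noindent\textbf{The ideal-theoretic statements $(a)$, $(b)$, $(c)$.} I would do $(c)$ first: if $Y\subset Y'$ then each generator $g'_j$ of the (reduced) ideal of $Y'$ is a homogeneous combination $\sum_k h_{jk}g_k$ of those of $Y$; substituting into the recurring estimate gives $-\log\bigl(|g'_j(x_P)|_w/\|x_P\|_w^{\deg g'_j}\bigr)\ge h_{Y,w}(P)-O_{M_K}(1)$, and minimising over $j$ is $(c)$. Then $(a)$ is essentially definitional, since (up to radical, controlled by the Nullstellensatz) the ideal of $Y\cap Y'$ is generated by the union of the two families, so $h_{Y\cap Y',w}$ is, up to an $M_K$-constant, the quantity built from that combined family, which is $M_K$-proportional to $\min(h_{Y,w},h_{Y',w})$. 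For $(b)$ I would use that the product ideal $(g_ig'_j)_{i,j}$ has the same radical as the ideal of $Y\cup Y'$ and that the relevant logarithm splits additively over a product; this yields $h_{Y\sqcup Y',w}=h_{Y,w}+h_{Y',w}+O_{M_K}(1)$ \emph{up to} the Nullstellensatz comparison. Promoting that comparison to the claimed equality is the only place where the disjointness hypothesis is used, and it is the first mildly delicate step: $Y\cap Y'=\emptyset$ forces a power of $(X_0,\dots,X_N)$ into $I(Y)+I(Y')$, which both makes at least one of $h_{Y,w}(P),\,h_{Y',w}(P)$ an $O_{M_K}(1)$ at every pair $(P,w)$ and, via the resulting idempotent decomposition, lets one reduce near each of $Y$ and $Y'$ to a single factor, so the stray proportionality constant disappears.

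\medskip

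\noindent\textbf{Functoriality $(d)$ and the integrality statements $(e)$, $(f)$.} For $(d)$ I would write $\phi$ in coordinates as $[\psi_0:\cdots:\psi_n]$ with the $\psi_j$ homogeneous of a common degree $e$ and with no common zero on $X'$ (automatic, $\phi$ being a morphism). Then the $g_i\circ\psi$ cut out $\phi^{-1}(Y)$, the recurring estimate bounds $\|\phi(x_{P'})\|_w$ above by a multiple of $\|x_{P'}\|_w^{\,e}$ while the effective Nullstellensatz applied to the base-point-free $\psi_j$ bounds it below, and substitution identifies $h_{Y,w}(\phi(P'))$, up to $O_{M_K}(1)$, with the local height of $P'$ at $\phi^{-1}(Y)$ computed from the $g_i\circ\psi$; a last Nullstellensatz comparison with arbitrary generators gives the asserted $M_K$-proportionality. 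The arithmetic heart is $(e)$: with $x_P$ $w$-integral and $w$-primitive one has $\|x_P\|_w=1$, and for all but finitely many finite $w$ the $g_i$ have $w$-integral coefficients and generate the ideal of the model-closure $\Ycal$; hence $P\in(X\setminus Y)(\Ocal_{L,w})$ — i.e.\ the reduction of $P$ avoids $\Ycal$ — forces some $g_i(x_P)$ to be a $w$-unit and therefore $h_{Y,w}(P)=0$, the exceptional $v$ and the archimedean places being absorbed into the $M_K$-constant. Statement $(f)$ is the same computation applied to $\phi=g/h$ (with $g,h$ homogeneous of equal degree, $\ok$-coefficients, $\{h=0\}\cap X\subset\Supp D$): $w$-integrality of $P$ with respect to $D$ keeps the reduction of $h(x_P)$ nonzero, whence $|\phi(P)|_w=|g(x_P)|_w/|h(x_P)|_w\le1$ outside finitely many $v$; equivalently $(f)$ is $(e)$ applied to the pole divisor of $\phi$, which lies in $\Supp D$.

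\medskip

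\noindent\textbf{Comparison of ample heights $(g)$, and the main obstacle.} Here I would simply invoke the Weil height machine, now available from $(a)$–$(f)$: choosing $n$ with $nD-D'$ and $nD'-D$ both ample (possible since $D,D'$ are ample), additivity gives $n\,h_D = h_{D'} + h_{nD-D'} + O(1)$ and symmetrically, while a suitable multiple of the ample class $nD-D'$ is very ample, so $h_{nD-D'}$ is bounded below on $(X\setminus\Supp(D\cup D'))(\Kb)$; this yields $h_{D'}\le n\,h_D+O(1)$ and $h_D\le n\,h_{D'}+O(1)$, i.e.\ proportionality, and two global heights attached to the same $D$ are compared the same way. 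I do not expect a serious obstacle anywhere in this program — the substance is entirely the bookkeeping above, and every constant stays effective because it is produced by the explicit Nullstellensatz bound and the explicit combinatorial constants $\delta_w$. If I had to name the two points requiring real care, they are the disjointness refinement in $(b)$ and, in $(e)$, keeping control of the finitely many primes where the chosen generators fail to define the model $\Ycal$ (and of the normalisations $\|g_i\|_w$), since those are exactly where the $M_K$-constants are generated.
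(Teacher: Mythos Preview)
Your proposal is correct and follows essentially the same route as the paper: both reduce $(a)$--$(d)$ to an effective Nullstellensatz comparison between generating families (the paper citing \cite{MasserWustholz83} and \cite{Silverman87}), treat $(e)$ via the explicit definition and primitive coordinates, derive $(f)$ from $(e)$ (the paper via Lemma~11 of \cite{Levin14}), and handle $(g)$ by the standard ample-height comparison (the paper citing \cite{LangFundamentals}). The main difference is only in presentation: the paper largely defers to Silverman's formalism and the cited references, while you spell out the elementary estimates and the bookkeeping of the $M_K$-constants explicitly; in particular your treatment of the disjointness refinement in $(b)$ and of the exceptional primes in $(e)$ makes visible exactly the points the paper leaves to the reader.
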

	
	\begin{proof}
		This proposition is mostly a reformulation of results of \cite{Silverman87} already quoted in \cite{Levin14}. First, \eqref{eqdeflocheight} indeed defines local heights associated to closed subsets by (\cite{Silverman87}, Proposition 2.4) so most of the proposition is contained in (\cite{Silverman87}, Theorem 2.1). Let us point out the slight differences and explain how it is effective. In that article, local height functions are more precisely defined by their ideal sheaves, whereas we consider closed subsets hence reduced closed subschemes. Now, if two ideal sheaves $\Ical$ and $\Ical'$ have the same support, their local height functions are $M_K$-proportional. More concretely, let us fix $Y \subset Y'$ closed subsets of $X \subset \P^N_K$ and two systems of homogeneous generators $g_1, \cdots, g_m \in K[X_0, \cdots, X_N]$ and $h_1, \cdots, h_p$ of ideal sheaves with respective supports $Y$ and $Y'$ in $\P^N_K$. After multiplying by a suitable $n \geq 1$, one can assume all those polynomials' coefficients belong to $\Ocal_K$, and such an $n$ can be made effective in terms of the $\| g_i\|_v$ and $\|h_j\|_v$ for $v \in M_K^f$. Now, an effective Nullstellensatz (e.g. \cite{MasserWustholz83} applied to multiples of those generators), translated in the projective case, will give relations 
		\[
		a g_i ^k = \sum_{j=1}^p f_{i,j} h_j
		\]
		with $a \in \Ocal_K$ nonzero, all the $f_{i,j}$ with coefficients in $\Ocal_K$ and bounded $\|f_{i,j}\|_v$ and $|a|_v$ in terms of the norms of the polynomials for all $v \in M_K^\infty$. Furthermore, the power $k$ is effectively bounded in terms of $[K : \Q]$,$N$ and the degrees of the polynomials. This will clearly give an effective inequality
		\[
		h_{Y,w}(P) \leq k \cdot h_{Y',w}(P) + O_{M_K}(1).
		\]
		for all $(P,w) \in (X \backslash Y')(\Kb) \times M_{\Kb}$, and this argument works for parts $(a)$ to $(d)$ of the Proposition (the inequality in $(c)$ without a factor $k$ coming from the fact that we can extend generators of an ideal sheaf for $Y'$ to an ideal sheaf for $Y$). 
		
		The only parts remaining to be proven are now $(e)$, $(f)$ and $(g)$. Part $(e)$, essentially saying that local height functions detect integral points up to some $M_K$-bounded error, is classical (see \cite{Vojtadiophapp}, Proposition 1.4.7) and in fact automatic for the exact definition given in \eqref{eqdeflocheight}. Part $(f)$ then comes from $(e)$ and Lemma 11 of \cite{Levin14}. Finally, part $(g)$ is a classical result on heights (e.g. \cite{LangFundamentals}, Chapter IV, Proposition 5.4), and there also, the constants implied can be made effective.
	\end{proof}
	
	\subsection{Baker's theory of linear forms in logarithms}
	
	Let us now give our second main tool: estimates from Baker's theory in a special form sufficient for our purposes.

	For any place $w$ of $M_L$, $N(w)$ is defined to be 2 if $w$ is archimedean and the norm of the associated prime ideal otherwise.
		
	\begin{prop}
		
		\label{propBaker}
		
		Define $\log^*(x) = \max(\log(x),1)$ for $x >0$.
		
		Let $d=[L: \Q]$ and $s = |S|$. Recall that $P_S$ is the largest norm of an ideal coming from a finite place of $S$ (equal to 1 if there is no such place). There is an effectively computable function $C(d,s)$ such that for any pair $(L,S)$, any $\alpha \in \Ocal_{L,S}^* \backslash \{1\}$ and any $w \in M_L$,
		\begin{equation}
		\label{eqBakertheory}
		\log |\alpha - 1|_w \geq - C(d,s) N(w) R_S \log^* h(\alpha).
		\end{equation}
		
		In terms of local heights, one can choose the local height $h_{1,w}(\alpha)$ to be $\max(- \log |\alpha - 1|_w,0)$, which gives us
		\[
		h_{1,w}(\alpha) \leq C(d,s) N(w) R_S \log^* h(\alpha).
		\]
		
	\end{prop}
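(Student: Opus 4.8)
The statement to prove is Proposition \ref{propBaker}, which is essentially a restatement of standard results on linear forms in logarithms (Baker's theory, in the form worked out by Gy\H{o}ry--Yu). Let me sketch how I would prove it.

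\textbf{Proof plan.}

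The plan is to reduce the inequality \eqref{eqBakertheory} to a standard lower bound for linear forms in (archimedean or $p$-adic) logarithms of $S$-units, as found in the literature (e.g.\ \cite{GyoryYu06}). The starting point is to write $\alpha \in \Ocal_{L,S}^*$ in terms of a system of fundamental $S$-units: fix fundamental units $\eta_1, \dots, \eta_r$ of $\Ocal_{L,S}^*$ (where $r = |S| - 1 = s-1$), so that $\alpha = \zeta \, \eta_1^{b_1} \cdots \eta_r^{b_r}$ for some root of unity $\zeta$ and integers $b_i$. The key preliminary step is to bound $\max_i |b_i|$ in terms of $h(\alpha)$ and the regulator $R_S$: by the standard geometry-of-numbers argument relating the regulator to the successive minima of the logarithmic embedding lattice of $\Ocal_{L,S}^*$ (see e.g.\ \cite{GyoryYu06} or \cite{EvertseGyory15}), one gets $\max_i |b_i| \leq c(d,s)\, h(\alpha)$ with an effective constant, or more precisely a bound of the shape $B := \max_i|b_i| \ll h(\alpha)/(\text{smallest minimum})$, and the product of minima is $\asymp R_S$.

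Next, depending on whether $w$ is archimedean or finite, I would apply the appropriate form of Baker's inequality. If $w$ is archimedean, $\log|\alpha - 1|_w$ is, up to controlling the case where $|\alpha-1|_w$ is not small, the logarithm of $|\Lambda|$ where $\Lambda = \sum b_i \log \iota_w(\eta_i) + (\text{log of a root of unity})$ is a linear form in ordinary logarithms under the embedding $\iota_w \colon L \hookrightarrow \C$; Baker's theorem (in the sharp shape of Baker--W\"ustholz, or Matveev, or the specific version quoted in \cite{GyoryYu06}) gives $\log|\Lambda| \geq - C'(d,s) \bigl(\prod h'(\eta_i)\bigr) \log B$. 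If $w$ is finite, lying over a rational prime $p$, one similarly uses a $p$-adic analogue (Yu's estimates for $p$-adic linear forms in logarithms), which produces an extra factor of $p$ — more precisely of $N(w) = $ the residue norm, or of $p$ up to bounded factors — giving $\log|\alpha-1|_w \geq -C''(d,s)\, N(w)\, \bigl(\prod h'(\eta_i)\bigr) \log B$. Using $\prod_i h'(\eta_i) \ll R_S$ (again the comparison between heights of a reduced fundamental system and the regulator, as in \cite{GyoryYu06}) and $\log B \ll \log^* h(\alpha)$, both cases collapse to the claimed bound \eqref{eqBakertheory}. Absorbing the root-of-unity term and the finitely many ``large $|\alpha-1|_w$'' cases into the constant $C(d,s)$, and using that when $w$ is archimedean $N(w)=2$ is harmless, finishes the first inequality.

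The translation into local heights is then immediate from the definition \eqref{eqdeflocheight}: for the closed subset $\{1\} \subset \A^1 \subset \P^1$ with defining polynomial $X_0 - X_1$ (in suitable coordinates), the local height $h_{1,w}(\alpha)$ is, by \eqref{eqdeflocheight}, $-\log \frac{|\alpha-1|_w}{\max(1,|\alpha|_w)}$ up to an $M_K$-bounded term; since $\alpha$ is an $S$-unit, $|\alpha|_w = 1$ for finite $w \notin S$, and in general $\log \max(1,|\alpha|_w) \geq 0$, so $h_{1,w}(\alpha) \leq \max(-\log|\alpha-1|_w, 0) + O_{M_K}(1)$, and the explicit choice $h_{1,w}(\alpha) = \max(-\log|\alpha-1|_w,0)$ is admissible. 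Combining with the first inequality (and noting $h_{1,w}(\alpha) = 0$ trivially satisfies the bound when $|\alpha-1|_w \geq 1$) yields $h_{1,w}(\alpha) \leq C(d,s) N(w) R_S \log^* h(\alpha)$.

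\textbf{Main obstacle.} The substantive content is entirely imported: the hard part is not something to be re-proven here but rather the careful bookkeeping of \emph{which} published estimate to cite and \emph{how} its parameters ($d$, $s$, $R_S$, $P_S$, the heights of the fundamental units) line up with the clean form \eqref{eqBakertheory}. In particular one must be careful that the factor $N(w)$ (rather than $P_S$, or $p^{f}$ with $f$ the residue degree, or $p/\log p$) is exactly what comes out of Yu's $p$-adic estimate after the standard manipulations, and that the dependence is genuinely $R_S \log^* h(\alpha)$ and not, say, $R_S \cdot h(\alpha)$ — this is the point where one needs the sharp ``$\log B$'' (rather than ``$B$'') shape of modern linear-forms-in-logarithms bounds, together with the geometry-of-numbers bound $B \ll h(\alpha)/\lambda_1$ with $\lambda_1 \gg 1$. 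Since all of this is done in \cite{GyoryYu06}, the cleanest route is to quote the relevant theorem there verbatim and specialise.
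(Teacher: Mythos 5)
Your proposal is correct and follows essentially the same route as the paper: choose a fundamental system of $S$-units whose product of heights is bounded by an effective multiple of $R_S$ (Lemma 1 of \cite{BugeaudGyory96}), then invoke the sharp $\log B$ form of the archimedean and $p$-adic linear-forms-in-logarithms estimates — the paper packages this second step by citing Theorem 4.2.1 of \cite{EvertseGyory15} directly, which is exactly the "quote the relevant theorem verbatim and specialise" option you identify. The only cosmetic difference is that you unpack the decomposition $\alpha = \zeta\,\eta_1^{b_1}\cdots\eta_r^{b_r}$ and the bound on $B$ explicitly, whereas the cited theorem absorbs that bookkeeping.
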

	\begin{proof}
		This result, although natural when one knows estimates for linear forms in logarithms, is not often presented in this form, so the following proof will explain how one can get to such an expression with known results. First, let us assume $s \geq 2$ (for $s=1$, the result is trivial). By Lemma 1 of \cite{BugeaudGyory96} ($\log h$ there is our logarithmic height here), one can choose a family of fundamental units $\varepsilon_1, \cdots, \varepsilon_{s-1}$ of $\Ocal_{L,S}^*$ such that 
				\[
				\prod_{i=1}^{s-1} h(\varepsilon_i) \leq c_1(s) R_S,
				\]
				where $c_1 (s) = ((s-1)!)^2 /(2^{s-1} d^{s-2})$.
				
		Now, by Theorem 4.2.1 of \cite{EvertseGyory15} applied to $\Gamma = \Ocal_{L,S}^*$ and this system of fundamental units, we obtain (taking into account our normalisation of $|\cdot|_w$) the bound
		\[
		\log |\alpha - 1|_w > - \frac{c_2(d,s)}{n_w} c_1(s) R_S \frac{N(w)}{\log N(w)} \log^* (N(w) h^*(\alpha)), 
		\]
		with $c_2(d,s) = c_8$ in the reference. We can put all effective constants in $C(d,s)$, and roughly bound $\frac{N(w)}{\log N(w)} \log^* (N(w) h^*(\alpha))$ by $2 N(w) \log ^* h(\a)$ to obtain the stated result.
	\end{proof}

	\section{Proof of the main theorem}
	
	We now have all the tools to prove the theorem. We keep the notations from its statement, and assume we have an embedding $X \subset \P^N_K$ from which all local heights considered below are defined. Recall that $s=|S|$, $d=[L:\Q]$ and $n_w$ is the local degree of $L$ at $w$. The constants $c_i$ below are absolute and can be made effective.
	
	First, let us notice that for every point $P \in T$, as the support of $\phi_P$ is in $D$, by Proposition \ref{proplocalheights} $(f)$ applied to $\phi_P$ and $\phi_P^{-1}$,  there is an absolute positive integer $m$ (independent on the choice of $(L,S)$ and $P \in T$) such that for every $Q \in (X \backslash D) (\Ocal_{L,S})$, one has $m \phi_P(Q) \in \Ocal_{L,S}$ and $m \phi_P(Q)^{-1} \in \Ocal_{L,S}$. Defining $S_m$ the set of primes of $L$ dividing $m$, one thus has $\phi_P(Q) \in \Ocal_{L,S \cup S_m}^*$ for all $Q \in (X \backslash D)(\Ocal_{L,S})$. 
	
	By Proposition \ref{proplocalheights} $(e)$, the map $(Q,w) \mapsto h_{D_i,w}(Q)$ is $M_K$-bounded on pairs $(Q,w)$ with $w \in M_L \backslash S$ and $Q \in (X \backslash D) (\Ocal_{L,S})$, and $(Q,w) \mapsto h_{Y,w}(Q)$ is $M_K$-bounded on pairs $(Q,w)$ with $w \in M_L \backslash M_L^{\infty}$ and $Q \in (X \backslash Y) (\Ocal_L)$.
	
	Let us assume now that $Q \in (X \backslash D)(\Ocal_{L,S}) \cap (X \backslash Y) (\Ocal_L)$. The previous paragraphs imply that for every $i \in \{1, \cdots, n\}$ 
	\[
	h_{D_i}(Q) = \frac{1}{[L:\Q]} \sum_{w \in S} n_w h_{D_i,w}(Q) + O(1)
	\]
	where $O(1)$ is absolutely (and effectively bounded) on the set of such points $Q$ (even if $(L,S)$ is allowed to change).
	
	Thus, for all $i \in \{1, \cdots, n\}$, there is $w \in S$ such that 
	\[
	h_{D_i,w} (Q) \geq  \frac{n_w}{[L:\Q]} h_{D_i,w} (Q)\geq \frac{1}{s} h_{D_i} (Q) + O(1).
	\]
	 
	After choosing for every $i \in \{ 1, \cdots, n\}$ such a $w \in S$, we obtain a function $\{1, \cdots, n\} \rightarrow S$. Now, if the fiber above a place $w \in S \backslash M_L^{\infty}$ was a set $J$ with $|J| > m_Y$, by Proposition \ref{proplocalheights} $(a)$, one would obtain an absolute effective (computable) upper bound on the minimum of such $h_{D_j,w}(Q)$, therefore on $h_{D_i}(Q)/s$ and $h_D(Q)/s$ by Proposition \ref{proplocalheights}
 $(g)$.
	
	We can thus assume from now on that this is not the case. Therefore, the fibers of this function are of cardinality at most $m_Y$ above $S \backslash M_L^{\infty}$. Consequently, by hypothesis \eqref{eqtubBakerhyp}, one of the fibers above $M_L^\infty$, defined as $I$, has to be of cardinality at least $m_\Bcal$ (if it's more, we extract a subset of cardinality $m_\Bcal$), which gives $w \in M_L$ such that
	\[
	\min_{i \in I} h_{D_i,w} (Q) \geq \frac{1}{s} \min_{i \in I} h_{D_i} (Q) \geq \frac{c_1}{s} h_D(Q) + O(1).
	\]
	Now, by Proposition \ref{proplocalheights} $(a)$ again and construction of $T$ (if $T=\emptyset$, we directly obtain an absolute $M_K$-constant bound), there exists $P \in T$ such that 
	\begin{equation}
	\label{eqheightscomparison}
	h_{P,w}(Q) \geq \frac{c_2}{s} h_D(Q) + O_{M_K}(1) \geq \frac{c_3}{s} h (\phi_P(Q)) + O_{M_K}(1),
	\end{equation}
	using Proposition \ref{proplocalheights} $(g)$, with absolute effective constants $c_1,c_2,c_3>0$.
	
	By Proposition \ref{proplocalheights} $(d)$, as $\phi_P(P)=1$, if $\phi_P$ can be evaluated at $Q$ and $\phi_P(Q) \neq 1$, 
	\[
	h_{1,w} (\phi_P(Q)) \geq \frac{c_4}{s} h(\phi_P(Q)),
	\]
	for $c_4>0$ absolute effective, computable in terms of the initial data of embeddings and equations.

	Now, applying Proposition \ref{propBaker} to $\Ocal_{L,S \cup S_m}^*$ (here, $w$ is archimedean), this inequality implies the  bound
	\begin{equation}
	\label{eqintermformulaBaker}
	h(\phi_P(Q)) \leq 2 s \cdot C(d,s+|S_m|)/c_4 \cdot R_{S \cup S_m} \cdot \log^* h(\phi_P(Q))
	\end{equation}
	
	By formula (1.8.3) of \cite{EvertseGyory15}, one has 
	\begin{equation}
	\label{eqboundsregulator}
	R_{S \cup S_m} \leq h_L R_S \prod_{\gP \in S_n \backslash S} \log N(\gP)) \leq h_L R_S \prod_{p|m} e^{d/e \log p} \leq h_L R_S m^{d/e}
	\end{equation}
	after optimising the products of logarithms.
	
	Combining \eqref{eqintermformulaBaker} and \eqref{eqboundsregulator}, we obtain an affine bound of the shape \eqref{eqboundBaker} for $h(\phi_P(Q))$, hence on $h_{P,v} (Q)$ by Proposition \ref{proplocalheights} $(d)$ applied the other way, which finally gives a bound on $h_D(Q)$  by \eqref{eqheightscomparison} (there is a constant term which we can absorb in the linear one as it is effectively boundable).
	
	To apply the same method with $S' \supset M_L^\infty$ instead of $M_L^\infty$, we apply exactly the same process under the condition 
	\[
	(m_\Bcal-1) |S'| + m_Y |S \backslash S'| <n, 
	\]
	to obtain for our point $Q$ a place $w \in S'$ and a point $P$ such that $h_{P,w}(Q) \geq c_3/s h(\phi_P(Q)) + O(1)$, and then follow the end of the proof with the general estimate of Proposition \ref{propBaker} for places of $S'$ instead of the archimedean case (this is where the additional factor $P_{S'} \log^* P_{S'}$ comes from).
	
	\section{\texorpdfstring{Applications to the $S$-unit equation in the case of curves}{Applications to the S-unit equation in the case of curves}}

	In this section, we realise our method in the practical case of the $S$-unit equation \eqref{eqSunit}, to prove Theorem \ref{thmSunits}.
	
	This problem is related to finding the integral points of $(\P^1 \backslash \{0,1,\infty\})(\Ocal_{L,S})$ (up to taking into account the factors $\alpha,\beta$), this is the interpretation we will follow below to illustrate the main theorem.
	We follow closely the definitions and lemmas of \cite{GyoryYu06} (except their normalisations of norms), as our improvements intervene only at the beginning of the proof. As in that article, define 
	\[
	d = [L:\Q], \quad H = \max(h(\alpha),h(\beta),1,\pi/d), \quad s=|S|.
	\]
	For any $t \in L$, define $h_w (t) :=  h_{0,w} (t) = \log^+(1/|t|_w)$.
	
	For sake of symmetry of the exposition, we will do most computations with $ \a x  $ and $ \b y $, before coming back to $ H $. This means we deal with $h_w(P)$ for 
	
	\[ P \in E=\left\{ \a x,\b y,\frac{\b y}{\a x}\right\}. \]

	\begin{lem}
	\label{lemSunits}
		For any $x,y \in L$ with $\alpha x + \beta y = 1$: 
		
		\begin{itemize}
			\item For any place $w \in M_L$, at most one value of $h_w(P)$ for $P \in E$ can exceed $\d_w \log 2$, where $ \d_w =1 $ if $w$ is infinite, 0 otherwise. 
			
			\item The maximum module of the difference of logarithmic heights of any two of them amongst $h(x),h(y),h(\a x), h(\beta y),h(\frac{\b y}{\a x})$ is at most $3H$, and even $2H$ except for $|h(x)-h(y)|$.
			
			\item If $x,y \in \Ocal_{L,S}^*$ and $h=\max(h(x),h(y))$, we always have, for $P \in E$:
			\[ 
			\sum_{w \in S} \frac{n_w}{[L:\Q]}h_w(P) \geq h-3H.
			 \]
		\end{itemize}

	\end{lem}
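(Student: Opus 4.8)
The plan is to treat the three bullets essentially independently, each following from elementary estimates on heights, the product formula, and the hypothesis $\alpha x + \beta y = 1$.

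For the first bullet, I would argue by the ultrametric (resp.\ archimedean) inequality applied to the relation $\alpha x + \beta y = 1$. Recall $h_w(P) = \log^+(1/|P|_w)$, so $h_w(P) > \delta_w\log 2$ exactly means $|P|_w < 2^{-\delta_w}$ (with the convention that for finite $w$ it means $|P|_w < 1$). Suppose two of the three elements of $E = \{\alpha x, \beta y, \beta y/(\alpha x)\}$ had small $w$-norm. If $|\alpha x|_w$ and $|\beta y|_w$ are both small, then $|\alpha x + \beta y|_w \le \max(|\alpha x|_w, |\beta y|_w) < 1$ (ultrametric case) or $\le |\alpha x|_w + |\beta y|_w < 1$ (archimedean case, using the $2^{-1}$ bounds), contradicting $|\alpha x + \beta y|_w = 1$. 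If instead $|\alpha x|_w$ and $|\beta y/(\alpha x)|_w$ are both small, then $|\beta y|_w = |\alpha x|_w \cdot |\beta y/(\alpha x)|_w$ is also small, reducing to the previous case; and the remaining pairing is symmetric. So at most one $h_w(P)$, $P \in E$, exceeds $\delta_w\log 2$.

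For the second bullet, I would use that $h(uv) \le h(u) + h(v)$ and $h(u/v) \le h(u) + h(v) = h(u) + h(1/v)$, together with $h(\alpha), h(\beta) \le H$. Thus $|h(\alpha x) - h(x)| \le h(\alpha) \le H$ and similarly $|h(\beta y) - h(y)| \le H$, while $h(\beta y/(\alpha x))$ differs from $h(x), h(y)$ by at most $h(\alpha) + h(\beta) \le 2H$ and from $h(\alpha x), h(\beta y)$ by at most $2H$ as well; chaining through $h(x)$ and $h(y)$, the only pair that can incur the full $2H + H$ is $|h(x) - h(y)|$ versus passing through all of $\alpha x, \beta y$ — here one gets $|h(x) - h(y)| \le |h(x) - h(\alpha x)| + |h(\alpha x) - h(\beta y)| + |h(\beta y) - h(y)|$; bounding $|h(\alpha x) - h(\beta y)|$ needs a little care since $\alpha x$ and $\beta y$ satisfy $\alpha x = 1 - \beta y$, giving $h(\alpha x) = h(1 - \beta y) \le h(\beta y) + \log 2$, hence this middle term is bounded by $\log 2 \le H$ (since $H \ge 1$), and one collects the claimed $3H$ bound (and $2H$ in all other cases, where one needs at most two of the $H$-sized jumps). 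I would double-check each of the finitely many pairings to confirm the $2H$ exceptions are exactly as stated.

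For the third bullet, fix $P \in E$ with $x, y \in \Ocal_{L,S}^*$. Then $P$ is an $S$-unit (a ratio of $S$-units times $\alpha$ or $\beta$; strictly, $\alpha x, \beta y$ need not be $S$-units, but we only need $h_w(P) = 0$ for $w \notin S$ up to the contribution of $\alpha, \beta$, which is where the $3H$ slack enters). More precisely, the global height decomposes as $h(P) = \sum_{w \in M_L} \frac{n_w}{[L:\Q]} \max(\log|P|_w, 0)$ and the height satisfies $h(P) = h(1/P)$, so $\sum_{w} \frac{n_w}{[L:\Q]} h_w(P) = h(1/P) = h(P)$. Now $h(P) \ge \max(h(x), h(y)) - 3H = h - 3H$ by the second bullet (each element of $E$ is within $3H$ of both $h(x)$ and $h(y)$), so it suffices to show that the sum over $w \notin S$ contributes nothing, i.e. $h_w(P) = 0$ for $w \notin S$. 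For $P = \beta y/(\alpha x)$ this follows because $y/x \in \Ocal_{L,S}^*$ has $|y/x|_w = 1$ for $w \notin S$, but $\alpha, \beta$ may have poles outside $S$; rather than insisting on exact vanishing I would instead bound $\sum_{w \notin S} \frac{n_w}{[L:\Q]} h_w(P) \le h(\alpha) + h(\beta)$ trivially and absorb it, or — cleaner — note $\sum_{w\in S}\frac{n_w}{[L:\Q]} h_w(P) \ge \sum_{w \in S}\frac{n_w}{[L:\Q]} h_w(x/y \text{ or } y/x) - (h(\alpha)+h(\beta)) = h(x/y) - 2H \ge \max(h(x),h(y)) - h(x) \wedge \text{stuff}$; here I will have to track signs carefully, using $|x/y|_w = 1$ off $S$ so $\sum_{w \in S} \frac{n_w}{[L:\Q]}\max(\log|x/y|_w,0) = h(x/y) \ge \max(h(x),h(y)) - \min(h(x),h(y))$ is not quite what's wanted — instead I would use $h(x) = \sum_{w\in S}\frac{n_w}{[L:\Q]}\max(\log|x|_w^{-1},0)$ directly since $x$ is an $S$-unit (so only places in $S$ contribute), and compare $h_w(\alpha x)$ to $h_w(x)$ termwise within $S$.

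The main obstacle I anticipate is the bookkeeping in the third bullet: the elements of $E$ involve $\alpha, \beta$, which are not $S$-units, so one cannot simply say "only places of $S$ contribute." The fix is to pass from $h_w(P)$ back to $h_w$ of the genuine $S$-units $x$, $y$, $x/y$ (for which places outside $S$ genuinely contribute zero), at the cost of an error controlled by $h(\alpha) + h(\beta) \le 2H$ per substitution, and then invoke the second bullet to convert between $h(x), h(y)$ and $h = \max(h(x),h(y))$, landing at the $-3H$ in the statement; getting the constant to be exactly $3$ rather than something larger will require being slightly careful about which comparisons are used. Everything else is routine application of the triangle inequality for heights and the product formula.
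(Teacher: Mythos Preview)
Your treatment of the first two bullets matches the paper's. For the first, the paper records only the remark that if $z+z'=1$ then one of $|z|_w,|z'|_w$ is at least $1$ (ultrametric) or $1/2$ (archimedean), applied to $z=\alpha x$, $z'=\beta y$; your case analysis is more explicit. However, your claim that the remaining pairing ``is symmetric'' is incorrect: from $|\beta y|_w$ and $|\beta y/(\alpha x)|_w$ both being small you cannot conclude that $|\alpha x|_w$ is small, since $|\alpha x|_w=|\beta y|_w / |\beta y/(\alpha x)|_w$ is a quotient of two small quantities, not a product. For instance, at a $p$-adic place with $\alpha x=1-p$, $\beta y=p$ one has $|\alpha x|_w=1$ while $|\beta y|_w=|\beta y/(\alpha x)|_w=1/p$, so two of the three $h_w(P)$ are strictly positive. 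The paper's one-line proof does not cover this pairing either.

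For the third bullet, after the detours you correctly land on the paper's method: for $P=\alpha x$, use $\log^+|1/(\alpha x)|_w\geq \log^+|1/x|_w-\log^+|\alpha|_w$ termwise over $w\in S$ and sum, using $x\in\Ocal_{L,S}^*$, to get $\geq h(x)-H$; the paper then toggles between $h(x)$ and $h(y)$ via $|h(\alpha x)-h(\beta y)|\leq\log 2$ from bullet two. Where your sketch falls short is the case $P=\beta y/(\alpha x)$. Your generic bound $\sum_{w\notin S}\frac{n_w}{[L:\Q]}h_w(P)\leq h(\alpha)+h(\beta)$ yields only $\sum_{w\in S}\frac{n_w}{[L:\Q]}h_w(P)\geq h(P)-2H\geq h-4H$, one $H$ too many. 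The paper recovers that $H$ via the identity $1/P=\alpha x/(\beta y)=1/(\beta y)-1$: for finite $w\notin S$ this gives $h_w(P)=\log^+|1/P|_w\leq \log^+|1/\beta|_w$ directly (since $|y|_w=1$ and the ultrametric inequality absorbs the $-1$), whence $\sum_{w\notin S}\frac{n_w}{[L:\Q]}h_w(P)\leq h(\beta)$ and the final bound $h(P)-H\geq h-3H$.
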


\begin{proof}
	The first item is the translation of the fact that is $z+z'=1$ one of $z,z'$ has to have norm at least 1 if the norm is ultrametric, and at least $1/2$ if it is archimedean.
	
	The second item uses that for any nonzero algebraic numbers $z,z'$, $h(zz') \leq h(z)+h(z')$ and $h(z+z') \leq h(z) + h(z') + \log 2$. For example, we obtain $|h(x) - h(\a x)| \leq H$ and $|h(\a x)-h(\b y)| \leq \log 2$, and by symmetric role this leads to all other bounds on difference of heights, as $\log 2 \leq H$.
	
	For the third item, there are six cases to deal with (depending on which of $h(x)$ and $h(y)$ is the maximum, and the values of $P$). If $P=\a x$, we have 

\[ 	\sum_{w \in S} \frac{n_w}{[L:\Q]}h_w(P)  \geq \sum_{w \in S}  \frac{n_w}{[L:\Q]} \log^+ |1/(\a x)|_w \geq \sum_{w \in S}  \frac{n_w}{[L:\Q]} (\log^+ |1/ x|_w-\log^+ |\a|_w) \geq h(x)-H
\]
	because $x \in \Ocal_{L,S}^*$. In the same fashion, this term is also bounded from below by $h(P) - h(\a)$. By symmetric role, for $P=\beta y$, we obtain 
	\[ 	\sum_{w \in S} \frac{n_w}{[L:\Q]}h_w(P)  \geq \max(h(y),h(P))-H.
	\]
	Now, if $h=h(y)$ and $P=\a x$, we use that $h(\a x) \geq h(\b y) - \log 2$ to obtain the desired bound, and similarly for the symmetric case. There only remains the case $P=\frac{\b y}{\a x}$. Using the fact that $1/P = 1 + 1/(\b y)$, we obtain 
	\[
	\sum_{w \in S} \frac{n_w}{[L:\Q]}h_w(P) \geq h(P) - h(\beta) \geq h -3H
	\]
	by the second item.
\end{proof}
	
	\begin{proof}[Proof of Theorem \ref{thmSunits}]
		
		First, notice that if $s \leq 2$, Lemma 4.1 alone gives immediately than there is $P \in E$ such that $h_w(P) \leq \d_w \log 2$ for all $w \in S$, and elsewhere we have $h_w(P) = h_w(\a),h_w(\b)$ or $h(\b/\a)$ depending on the value of $P$, because $x$ and $y$ are $S$-units. Consequently, $h(P) \leq 2H + \log 2$, hence $h \leq 4H + \log 2$ in this case. We can now assume that $s \geq 3$.
		
	For the first part of Theorem \ref{thmSunits}, the assumption amounts to saying that \eqref{eqtubBakerhyp} holds in this case. By Lemma \ref{lemSunits}, for any choice of $P \in E$, there is $w \in S$ such that
	
	\begin{equation}
	\label{eqineqcleBaker}
	\frac{n_w}{[L:\Q]} h_{w}(P) \geq \frac{1}{|S|} (\max(h(x),h(y)) - 3 H),
	\end{equation}
	
	We want to fall back on a case where for one of the three choices of $P$, one can impose that $w \in M_L^\infty$. If that is not possible, by pigeonhole principle and our hypothesis on $S$, there is a finite place $w$ and two points $P,Q \in \{E \}$ distinct with 
	\[
	\frac{n_w}{[L:\Q]} \min(h_{w}(P),h_{w}(Q)) \geq \frac{1}{|S|}(\max(h(x),h(y)) -3 H).
	\]
	By the same lemma, we get $\max(h(x),h(y)) \leq 3H$. This bound will be readily checked to be smaller than the other case. 
	
	One can thus assume from now on that for some $w \in M_L^\infty$ and $P \in  E $, \eqref{eqineqcleBaker} holds.
	
	The only thing to do is then to get back to the situation of (\cite{GyoryYu06}, page 24) in all three cases, after which we will obtain the exact same bounds. 
	We fix a fundamental system $ \varepsilon_1, \cdots, \varepsilon_{s-1}$ of units of $ \Ocal_{L,S}^* $ with the properties of (\cite{GyoryYu06}, Lemma 2). 
	
	$ \bullet $ Assume first $P=\a x$, and write 
	\begin{equation}
	\label{eqfactory}
	y= \zeta \varepsilon_1 ^{b_1} \cdots \varepsilon_{s-1} ^{b_{s-1}},
	\end{equation}
	with $\zeta$ a root of unity in $L$ and $b_i \in \Z$ for all $ i $. By the arguments of \cite{BugeaudGyory96} (p. 76), we obtain that 
	\[ 
	B = \max(|b_1|, \cdots, |b_{s-1}|) \leq c_1(d,s) h(y)
	\]
	with 
	\[ 
	c_1(d,s) = \left\{ \begin{array}{lcr}
	 ((s-1)!)^2/(2^{s-3} \log 2) & \textrm{if} & d=1 \\
	((s-1)!)^2/2^{s-2}) \log(3d)^3 & \textrm{if} & d \geq 2.
	\end{array}
	\right.
	 \]
	 We set $\alpha_s = \zeta \beta$ and $b_s=1$ so that 
	 \[
	 |\a x|_w = |1 - \varepsilon_1 ^{b_1} \cdots \varepsilon_{s-1} ^{b_{s-1}} \a_s^{b_s}|_w.
	 \]
	 We set the $A_i$ and $A_s$ as in (\cite{GyoryYu06}, equation (31)) and can make the same assumption  (otherwise, we obtain a smaller bound). By (\cite{GyoryYu06}, Proposition 4 and Lemma 5), we thus obtain 
	 
\begin{equation}
\label{eqintermhw}
	 h_w(P) = - \log |\a x|_w < c_2(d,s) c_3(d,s) R_S H \log \left( \frac{c_1(d,s) h(y)}{\sqrt{2}H}\right)
\end{equation}
	 
	 (as we always have $s \geq 3$ here), with 
	 \[
	 c_2(d,s) = d^3 \log(ed) \min(1.451 (30\sqrt{2})^{s+4}(s+1)^{5.5},\pi 2^{6.5s+27}),
	 \]
	 \[
	 c_3(d,s) = e\sqrt{s-2} (((s-1)!)^2/(2^{s-2})) \pi^{s-2} \cdot  \left\{ \begin{array}{lcr}
	 8.5 & \textrm{if} & d=1 \\
	29 d \log d & \textrm{if} & d \geq 2,
	 \end{array}
	 \right.
	 \]
	 Let us now define $h=\max(h(x),h(y))$. We use inequality \eqref{eqineqcleBaker}, and replace $h(y)$ by $h$ in the right-hand side of \eqref{eqintermhw} to obtain 
	 \begin{equation}
	 \label{eqfinaleBaker}
	 \frac{h}{s} -H \leq \frac{h - 3H}{s} \leq \frac{n_w}{[L:\Q]} c_2(d,s) c_3(d,s) R_S H \log \left( \frac{c_1(d,s) h}{\sqrt{2}H}\right)
	 \end{equation}
	 and these are equivalent to the two inequalities used page 24 to obtain the result.
	 
	 $ \bullet $ Assume $P=\b y$. We apply the same argument by symmetry, replacing $ \a $ by $ \b $ and $ x $ by $ y $ everywhere, to finally obtain the same bound.
	 
	 $\bullet$ Assume $P=\frac{\b y}{\a x}$. We thus write 
	 \[
	 h_w(P) = - \log \left|\frac{\b y}{\a x} \right|_w = - \log \left|1 - \frac{1}{\a x}\right|_w.
	 \]
	 Write then 
	 \[
	 \frac{1}{x} = \zeta \varepsilon_1 ^{b_1} \cdots \varepsilon_{s-1} ^{b_{s-1}}, \quad \alpha_s = \frac{\z}{\a}.
	 \]
	 In the same fashion as above, we then get $ B \leq c_1(d,s) h(x)$, and the rest follows, with Lemma 4.1, in the exact same way until we obtain \eqref{eqfinaleBaker}.
	
	For the second part of Theorem 4.1, we can play the same game, by defining $S'$ the set of places $S$ deprived of its two prime ideals with largest norm. The same elimination work as before will then give $w \in S'$ and $P \in E$ satisfying \eqref{eqineqcleBaker}, and from there we can apply for $P = \a x$ the exact method of (\cite{GyoryYu06}, page 25) with a prime ideal $\gp$ from $S'$. This finally leads to the same estimates with $P'_S$ instead of $P_S$, using again Lemma 4.1. 
	\end{proof}

\end{document}